\newtheorem{prop}{Proposition}[section]
\newtheorem{theorem}[prop]{Theorem}
\newtheorem{lemma}[prop]{Lemma}
\newtheorem{conjecture}[prop]{Conjecture}
\theoremstyle{definition}
\newtheorem{definition}[prop]{Definition}
\theoremstyle{remark}
\newtheorem{remark}[prop]{Remark}
\newtheorem{question}[prop]{Question}
\newcommand{\PP}{\mathbb{P}}
\newcommand{\oo}{\mathcal{O}}
\newcommand{\rk}{\mathrm{rk}}
\newcommand{\CC}{\mathbb{C}}
\newcommand{\lls}[1]{|#1|}
\title{On linear stability and syzygy stability for 
rank 2 linear series}
\author{Abel Castorena, Ernesto C. Mistretta, Hugo Torres-L\'opez}
\begin{document}

\maketitle

%

\begin{abstract}

In previous works,
the authors investigated the relationships between
linear stability of a generated linear series $|V|$ on a curve $C$, 
and slope stabillity of the vector bundle 
$M_{V,L} := \ker (V \otimes \oo_C \to L)$. 
In particular,
the second named author and L. Stoppino conjecture that, 
for a complete linear system $|L|$,
linear (semi)stability is equivalent 
to slope (semi)stability of $M_V$,
and the first and third named authors proved that this conjecture 
holds for hyperelliptic and for generic curves.

In this work we provide a counterexample to this conjecture 
on any smooth plane curve of degree $7$.

\end{abstract}

\section{Introduction}

Let $C$ be a smooth projective curve over $\CC$,
and let $L$ be a globally generated line bundle on $C$,
with $\deg L =d$ and $\rk L = h^0(L) -1$, 
let $V \subseteq H^0(C, L)$ a subspace of dimension $r+1$.
Then $M_{V,L} := \ker (V \otimes \oo_C \to L) $ 
is a rank $r$ vector bundle,
it appears in different ways and has been given different names
in the literature (cf. \cite{EinLaza},\cite{but}, \cite{mistretta06}, \cite{mistretta}, \cite{ecmistable}
\cite{leticia}, \cite{hugoleticia}).

Slope semistability of $M_{V,L}$ for a generic linear subsystem 
of a generic generated line bundle on a generic curve was conjectured by Butler in (cf. \cite{but})
 and proven in (cf. \cite{leticia}).
An analogue conjecture is still open for higher rank vector bundles.

In \cite{mistrettastoppino} the second named author and L. Stoppino
 investigate the realtionships
between linear (semi)stability 
of the linear series $|V| \subseteq |L|$,
and slope (semi)stability of $M_{V, L}$. In particular,
it is immediate to 
show that slope (semi)stability of
$M_{V, L}$ implies linear (semi)stability of $|V|$ (cf. Lemma \ref{linslope} below),
and they prove that the two conditions are equivalent 
in some cases and give some examples when they aren't.
Furthermore they conjecture that for complete linear systems,
the two conditions are always equivalent.
The only evidence on that conjecture lied in the fact that the conjecture seemed likely to hold on general curves.

In fact the first and third named authors 
showed in \cite{castorenatorres}
that the conjecture holds when $C$ is a hyperelliptic curve or a 
Brill-Noether-Petri general curve.

The purpose of this work is to show that this conjecture does not hold 
for smooth plane curves. In particular, we prove the following 

%

\begin{theorem}

Let $C$ be a smooth plane curve of degree $d=7$.
Then a generic element $L$ in any component of $W^2_{15} (C)$ satisfies:

\begin{enumerate}

\item The complete linear series $|L|$ is linearly stable.

\item The vector bundle  $M_L$ is not semistable.
\end{enumerate}  

\end{theorem}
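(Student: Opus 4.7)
For assertion \emph{(2)}, the plan is to apply a multiplication-map argument. Let $H = \mathcal{O}_C(1)$ be the hyperplane class, so $\deg H = 7$, $h^0(H) = 3$, $K_C = 4H$, and $g = 15$. Consider the map
\[
\mu \colon H^0(L) \otimes H^0(H) \longrightarrow H^0(L \otimes H).
\]
For a generic $L$ in any component of $W^2_{15}(C)$, one has $h^0(L) = 3$, and Riemann-Roch gives $h^0(L \otimes H) = 8 + h^0(3H - L)$. Since the locus $\{L \in W^2_{15}(C) \colon L \leq 3H\}$ has dimension at most $6$ (it is parameterized by effective divisors of degree $6$), strictly less than the expected dimension $9$ of $W^2_{15}(C)$, the degree-$6$ line bundle $3H - L$ is non-effective for generic $L$, whence $h^0(L \otimes H) = 8$. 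The source of $\mu$ then has dimension $9$, forcing $\ker \mu \neq 0$. A nonzero $\omega \in \ker \mu$, viewed as a nonzero section of the rank-$3$ bundle $H^0(L) \otimes H$, yields an injection $H^{-1} \hookrightarrow H^0(L) \otimes \mathcal{O}_C$ whose composition with the evaluation map to $L$ vanishes, hence factors through $M_L$. Its saturation is a sub-line-bundle $A \hookrightarrow M_L$ with $\deg A \geq -7 > -\tfrac{15}{2} = \mu(M_L)$, destabilizing $M_L$.

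For assertion \emph{(1)}, since $h^0(L) = 3$ and $M_L$ has rank $2$, linear stability of $|L|$ is equivalent to the absence of a sub-line-bundle $L' \subsetneq L$ with $\deg L' \leq 7$ and $h^0(L') \geq 2$: such an $L'$ would produce a pencil $V' \subseteq H^0(L)$ with base locus of degree $15 - \deg L' \geq 8$, violating the strict inequality required for linear stability. By the classification of pencils of degree at most $7$ on smooth plane septics (Namba, Coppens, Martens, Kato), any such $L'$ is one of $|H - p|$ (a $g^1_6$), $|H - p + q|$ with $p \neq q$ (a $g^1_7$), or $|H|$ (the $g^2_7$). I would then establish, by a dimension count, that for each of these types the locus
\[
\mathcal{Z} = \{L \in W^2_{15}(C) \colon L \supseteq L' \text{ for some such pencil } L'\}
\]
has dimension strictly less than $9$ in every irreducible component of $W^2_{15}(C)$, so a generic element of every component avoids them all. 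The prototype case is $L' = H$: the locus $\{H + D \colon D \in C^{(8)}\}$ is $8$-dimensional in $\pic^{15}(C)$, one less than the expected dimension of any component.

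The main obstacle will be the dimension analysis in assertion (1): the naive parameter spaces for $\{L \supseteq g^1_6\}$ and $\{L \supseteq g^1_7\}$ inside $\pic^{15}(C)$ have dimensions close to $10$, so the additional constraint $h^0(L) \geq 3$ must be exploited to cut them down below $9$. This will require a careful description of the irreducible components of $W^2_{15}(C)$ through the Riemann-Roch duality $L \longleftrightarrow K_C - L \in W^1_{13}(C)$, combined with structural results on pencils of degree close to $2g - 2 - d$ on smooth plane septics.
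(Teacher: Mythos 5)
Your argument for assertion (2) is correct and is in substance identical to the paper's: the paper takes $B=\oo_{\PP^2}(1)|_C$, bounds the locus $\{L=\omega_C\otimes B^*\otimes F^*,\ F\in W^0_6(C)\}$ (your locus $\{L\le 3H\}$) by dimension $6<9$, deduces $h^1(L\otimes B)=0$, hence $h^0(L\otimes B)=8<9$, and obtains a nonzero map $B^*\hookrightarrow M_L$ with $\mu(B^*)=-7>-15/2$. Note, however, that already here you use without proof that a generic $L$ in \emph{every} component of $W^2_{15}(C)$ is globally generated with $h^0(L)=3$; this is needed both for $M_L$ to be a rank-$2$ bundle of slope $-15/2$ and for the source of your multiplication map to be $9$-dimensional, and it is not automatic (the paper derives it from Mumford's and Keem's dimension bounds, via the claim that every component of $W^2_{14}(C)$ has dimension at most $g-8$).

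For assertion (1) there is a genuine gap, and you have located it yourself. The reformulation is fine: since $\dim H^0(L)=3$, linear stability fails exactly when some pencil in $|L|$ has base divisor of degree at least $8$, i.e.\ when $L$ contains a line subbundle $L'$ with $\deg L'\le 7$ and $h^0(L')\ge 2$, and the classification of such $L'$ on a smooth plane septic ($H-p$, $H-p+q$, or $H$) is available in the literature you cite. But the decisive step --- that the locus $\mathcal{Z}$ of line bundles containing such an $L'$ meets every irreducible component of $W^2_{15}(C)$ in dimension strictly less than $9$ --- is asserted, not proved, and as you observe the naive counts for the loci $\{(H-p)+D\}$ and $\{(H-p+q)+D'\}$ give dimension $10$ in $\pic^{15}(C)$, which exceeds $9$; so genericity cannot be concluded without seriously exploiting the condition $h^0\ge 3$, and that is precisely where the real content lies. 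The paper's route (Theorem \ref{voisinconsequence}, following Voisin) handles this by proving that for generic $L$ in any component the map $\varphi_L$ is birational onto a plane curve of degree $15$ all of whose points have multiplicity at most $2$, and then invoking Lemma \ref{lemmamult}; the hard step is showing, via the incidence scheme $Y$ of triples $(D',x,y)$, that any component of $Y$ dominating a component of $W^2_{15}(C)$ also dominates a component of $W^1_{13}(C)$ --- excluding the alternative by the normalization/degree-$2$ covering analysis together with Mumford's and Keem's bounds --- so that removing the two points of a fibre of $\varphi_L$ from $D$ yields a generic, hence base-point-free, pencil. Your plan would need an argument of comparable depth to carry out the announced dimension analysis, so as it stands assertion (1) (and with it the preliminary facts that $|L|$ is base point free with $h^0(L)=3$) remains unproved.
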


\subsection{Acknowledgements}

This research was partially funded by the PRIN research project
``Geometria delle Variet\`a algebriche'' code 2015EYPTSB-PE1, and partially funded by the research project
SID 2018 - NOVELLI ``Vector Bundles, Tropicalization, Fano Manifolds''.

\section{Notations and previous results}

Throughout this work, $C$ will denote a smooth projective curve of 
genus $g 
\geqslant 2$ over 
the field of complex numbers.
We will denote $\gamma = \gamma(C)$ the gonality of $C$,
\emph{i.e.} the smallest integer $\nu$ such that there exist a 
degree $\nu$ map to $\PP^1$. 

We will denote $\mathrm{Pic}(C)$ the Picard group of line bundles with tensor product,
and $\mathrm{Pic}^d(C)$ those of degree $d$.
For $L \in \mathrm{Pic}(C)$ we wil denote  
$|L| = \PP(H^0(C, L)^*)$ the complete linear series of effective divisors linearly equivalent to $L$,
and $|V|$ with $V \subset H^0(C, L)$ a linear series.
If needed we will use divisors (up to linear equivalence) 
and the additive notation instead of line bundles,
writing $|D|$ for $|\oo_C(D)|$,
$h^0(D)$ for $h^0(C, \oo_C(D))$, and so on.
We will denote $\omega_C$ the canonical line bundle and $K_C$ a canonical divisor.

We will denote as usual the Brill-Noether loci by
\[
W^r_d(C) = \{  P \in \mathrm{Pic}^d(C) ~|~ h^0(C, P) \geqslant r+1 \} ~.
\]
When the expected dimension of $W^r_d(C)$ is greater than  or equal to $0$,  this locus is non-empty and
every component of such a locus has dimension  greater than or equal to this expected dimension,
which is the Brill-Noether number
\[
\rho(d, r, g) = g - (r+1)(g-d+r) ~.
\]

Let $E$ be a vector bundle on $C$, 
the slope of $E$ is 
\[
\mu(E) := \frac{\deg E}{\rk E} ~.
\]

\begin{definition}

We say that $E$ is  \emph{stable} (respectively  \emph{semistable}) 
if any subbundle $0 \neq F \subsetneq E$ satisfies 
\[
\mu(F) < \mu(E) ~ 
~\textrm{ (respectively $\mu(F) \leqslant \mu(E)$)} ~.
\]

\end{definition}

Let $L$ be a line bundle on $C$, and let $V \subseteq H^0(C, L)$ 
a linear subspace. We say that the linear series $|V|$ generates 
the line bundle $L^{\prime}$, if $L^{\prime}$ is the image
of the map $V \otimes \oo_C \to L$. The rank of a linear 
series $|V|$ is $\rk |V| = \dim V -1$. 
We denote $M_{V,L} := \ker (V \otimes \oo_C \to L) $
and $M_{L} :=  M_{H^0(L),L}$. 
If $V$ generates $L$ then 
\[
\mu(M_{V,L}) = - \frac{\deg L}{\rk |V|} 
\]

\begin{definition}

We say that a pair $(L, V)$, where $L$ is a line bundle and $V \subseteq H^0(C, L)$, is a 
\emph{generating pair} if $V$ genrates $L$.
We say the linear series $|V|$,
or the generating pair $(L, V)$,
is \emph{linearly stable} (respectively \emph{linearly semistable}) if
for any subspace $W \subsetneq V$ with $\dim W \geqslant 2$, the line bundle 
$L^{\prime}$ generated by $W$ satisfies
\[
\frac{\deg L^{\prime}}{\rk |W| } >  \frac{\deg L}{\rk |V| }~
~ \textrm{ (respectively $\frac{\deg L^{\prime}}{\rk |W| } \geqslant \frac{\deg L}{\rk |V| }$)} ~.
\]

\end{definition}

\begin{lemma}
\label{linslope}
If a generating pair $(L,V)$ is such that $M_{V,L}$ is (semi)stable, then $(L,V)$ is linearly (semi)stable.
\end{lemma}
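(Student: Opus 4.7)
The plan is to convert any would-be witness of linear instability of $(L,V)$ directly into a destabilising subsheaf of $M_{V,L}$, thereby contrapositively deriving linear (semi)stability from slope (semi)stability. Given $W \subsetneq V$ with $\dim W \geqslant 2$, let $L' \subseteq L$ denote the line bundle generated by $W$ (explicitly $L' = L(-B_W)$, where $B_W$ is the base divisor of $W$). The inclusions $W \hookrightarrow V$ and $L' \hookrightarrow L$ fit into a commutative diagram of short exact sequences
$$
\begin{CD}
0 @>>> M_{W, L'} @>>> W \otimes \oo_C @>>> L' @>>> 0 \\
@. @VVV @VVV @VVV @. \\
0 @>>> M_{V, L} @>>> V \otimes \oo_C @>>> L @>>> 0,
\end{CD}
$$
in which the middle vertical arrow is the natural inclusion.

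Next I would argue that the induced map $M_{W,L'} \to M_{V,L}$ is an injection of sheaves. Indeed, as a subsheaf of $V \otimes \oo_C$ the sheaf $M_{W,L'}$ is just the intersection $M_{V,L} \cap (W \otimes \oo_C)$, since any local section of $W \otimes \oo_C$ mapping to $0$ in $L'$ also maps to $0$ in $L$ by the injectivity of $L' \hookrightarrow L$. Hence $M_{W,L'} \subseteq M_{V,L}$ is a subsheaf of rank $\rk|W| = \dim W - 1$ and degree $-\deg L'$. I would then pass to its saturation $\widetilde{M}_{W,L'} \subseteq M_{V,L}$, a locally free subbundle of the same rank with $\deg \widetilde{M}_{W,L'} \geqslant -\deg L'$; because $\dim W < \dim V$, its rank is strictly smaller than $\rk M_{V,L}$, so it is a proper nonzero subbundle.

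Finally I would invoke (semi)stability of $M_{V,L}$. Slope semistability yields $\mu(\widetilde{M}_{W,L'}) \leqslant \mu(M_{V,L})$, whence
$$
-\frac{\deg L'}{\rk |W|} \;\leqslant\; \frac{\deg \widetilde{M}_{W,L'}}{\rk |W|} \;\leqslant\; -\frac{\deg L}{\rk |V|},
$$
which rearranges to $\deg L' / \rk|W| \geqslant \deg L / \rk|V|$, giving linear semistability of $(L,V)$. The stable case is identical but delivers strict inequality, since $\widetilde{M}_{W,L'}$ is a proper nonzero subbundle. The argument is essentially routine; the only point requiring some care is the passage to the saturation, which is needed to apply (semi)stability as defined for subbundles, and is harmless because saturation only increases the degree.
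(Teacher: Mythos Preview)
Your proof is correct and follows essentially the same route as the paper's: both build the commutative diagram exhibiting $M_{W,L'}$ as a subsheaf of $M_{V,L}$ and read off the slope comparison. Your extra care with saturation is harmless but in fact unnecessary, since the cokernel of $M_{W,L'}\hookrightarrow M_{V,L}$ is the kernel of the surjection $(V/W)\otimes\oo_C\twoheadrightarrow L/L'$, hence torsion-free on the smooth curve $C$ and therefore locally free, so $M_{W,L'}$ is already a subbundle.
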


\begin{proof}
In fact for any subspace $0 \neq W \subsetneq V$ generating $L^{\prime} \subseteq L$ we have the following diagram:

\begin{equation}\label{diagramalineal}
   \xymatrix{0  \ar[r]  & M_{W,{L^{\prime}}}   \ar@{^{(}->}[d]  \ar[r]    & 
  W \otimes\mathcal{O}_C\ar@{^{(}->}[d]  \ar[r]  & {\tilde L^{\prime}}   
\ar@{^{(}->}[d] \ar[r]  & 0\\
                                0\ar[r]^{}      & M_{V,L} \ar[r]_{}& V\otimes\mathcal{O}_C  \ar[r]_{}   & 
L \ar[r]^{} & 0    ~,\\  }
                       \end{equation}
where $\mu(M_{V,L}) = - \frac{\deg L}{\rk |V|}$ and $\mu(M_{W,L^{\prime}}) = - \frac{\deg L^{\prime}}{\rk |W|}$,
therefore (semi)stability of $M_{V,L}$ implies linear (semi)stability of $(L,V)$.

\end{proof}

The second named author and L. Stoppino investigate the reverse implication of the above Lemma
in \cite{mistrettastoppino},
in particular they conjecture that equivalence holds in the following cases:

\begin{conjecture}
\label{cliffconj}
If $(L, V)$ is a generating pair on $C$, such that 
\[
\deg L \leqslant 2 ~ \rk |V| + \mathrm{Cliff}(C)
\]
then linear (semi)stability of $(L, V)$ is equivalent to (semi)stability of $M_{V,L}$.

\end{conjecture}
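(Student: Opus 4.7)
The implication slope (semi)stability $\Rightarrow$ linear (semi)stability is Lemma \ref{linslope} and holds with no hypothesis on $\deg L$.  I therefore focus on the converse under the assumption $\deg L \leqslant 2 \rk|V| + \mathrm{Cliff}(C)$, arguing by contrapositive and by induction on $\rk|V|$.  The base case $\rk|V| = 1$ is vacuous, since then $M_{V,L}$ is a line bundle and automatically semistable.

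Assume $M_{V,L}$ is not slope semistable, and let $F \subsetneq M_{V,L}$ be a maximal destabilizing subbundle of rank $k$ and degree $e$, with $e/k > \mu(M_{V,L}) = -d/r$ where $d = \deg L$ and $r = \rk|V|$.  At each $x \in C$ the fiber $F_x$ is a $k$-dimensional subspace of $V$ contained in the kernel of the evaluation $V \to L_x$.  Set $\bar W := \sum_{x \in C} F_x \subset V$ and let $\bar L \subseteq L$ be the rank-one subsheaf generated by $\bar W$; then the diagram \eqref{diagramalineal} applied to $\bar W$ produces an inclusion $M_{\bar W,\bar L} \hookrightarrow M_{V,L}$ containing $F$ by construction.

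The key step is to invoke Clifford's theorem for $\bar L$: since $h^0(\bar L) \geqslant \dim \bar W$ and, assuming $\bar L$ contributes to the Clifford index, one has $\deg \bar L \geqslant 2(h^0(\bar L)-1) + \mathrm{Cliff}(C) \geqslant 2\rk|\bar W| + \mathrm{Cliff}(C)$, whence
\[
\frac{\deg \bar L}{\rk|\bar W|} \;\geqslant\; 2 + \frac{\mathrm{Cliff}(C)}{\rk|\bar W|} \;\geqslant\; 2 + \frac{\mathrm{Cliff}(C)}{r} \;\geqslant\; \frac{d}{r},
\]
where the last inequality uses the hypothesis on $\deg L$.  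This gives $\mu(M_{\bar W,\bar L}) \leqslant \mu(M_{V,L})$.  If $M_{\bar W,\bar L}$ is slope semistable the inclusion $F \subset M_{\bar W,\bar L}$ forces $\mu(F) \leqslant \mu(M_{\bar W,\bar L}) \leqslant \mu(M_{V,L}) < \mu(F)$, a contradiction.  Otherwise equality $\deg \bar L = 2 \rk|\bar W| + \mathrm{Cliff}(C)$ is forced, the Clifford-type hypothesis is preserved for $(\bar L,\bar W)$, and since $\rk|\bar W| < r$ one iterates by induction.

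The main obstacle is the degenerate case in which the fibers of $F$ already span all of $V$, so that $\bar W = V$ and no proper sub-linear series is produced by the fiberwise construction.  This should be handled via the dual sequence $0 \to L^{-1} \to V^* \otimes \oo_C \to M_{V,L}^* \to 0$ composed with the quotient $M_{V,L}^* \twoheadrightarrow F^*$: a cohomological estimate on $H^0(F^*)$, made possible by the Clifford-index bound on $L$ together with the semistability of $F^*$ and its controlled slope $-e/k < d/r$, should force the induced linear map $V^* \to H^0(F^*)$ to have a nontrivial kernel $U \subsetneq V^*$, and $W := U^\perp \subsetneq V$ then provides the proper sub-linear series needed to close the argument.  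A further delicate point is the exceptional Clifford case $\bar L \cong \omega_C$, where Clifford's inequality is no longer sharp and the inductive step requires separate geometric analysis, typically via the canonical embedding of $C$; this is where the proof is expected to be tightest.
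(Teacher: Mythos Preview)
The statement you are attempting to prove is Conjecture~\ref{cliffconj} in the paper, and the paper does \emph{not} prove it: it is recorded as an open conjecture from \cite{mistrettastoppino}, where it is established only in the special case $V = H^0(C,L)$ of a complete linear series. There is therefore no proof in the paper to compare your proposal against.

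Your proposal is a sketch with gaps you explicitly flag (the case $\bar W = V$, and the exceptional case $\bar L \cong \omega_C$), and it also contains gaps you do not flag. The most serious is the assumption that $\bar L$ contributes to the Clifford index: the inequality $\deg \bar L \geqslant 2(h^0(\bar L)-1) + \mathrm{Cliff}(C)$ requires $h^0(\bar L) \geqslant 2$ and $h^1(\bar L) \geqslant 2$, and nothing in your construction of $\bar L$ guarantees the latter; if $\bar L$ is nonspecial your chain of inequalities collapses entirely. A second issue is the inductive step: you assert that if $M_{\bar W,\bar L}$ is not semistable then equality $\deg \bar L = 2\rk|\bar W| + \mathrm{Cliff}(C)$ is ``forced,'' but your Clifford argument only yields $\deg \bar L \geqslant 2\rk|\bar W| + \mathrm{Cliff}(C)$, which is the wrong direction for the inductive hypothesis $\deg \bar L \leqslant 2\rk|\bar W| + \mathrm{Cliff}(C)$; there is no reason this upper bound should hold for the sub-pair. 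As written the argument does not close, and since the general conjecture remains open, a complete proof would constitute a new result rather than a reconstruction of something in the paper.
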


In the same work, the authors prove that the conjecture above holds 
if $V= H^0(C, L)$ is a complete linear series,
and they apply this to prove stability of some syszygy bundle $M_L$.
Furthermore they observe that in general the equivalence does not hold,
and observing the counter examples they can construct,
they state  the stronger conjecture:

\begin{conjecture}
\label{gonconj}
If $(L, V)$ is a generating pair on $C$, such that 
\[
\deg L \leqslant \gamma (C) \cdot \rk |V|
\]
then linear (semi)stability of $(L, V)$ is equivalent to (semi)stability of $M_{V,L}$.

\end{conjecture}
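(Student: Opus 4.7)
The plan is to prove part (2) (instability of $M_L$) by a direct multiplication-map computation using the hyperplane bundle $H := \oo_C(1)$ (so $\deg H = 7$, $h^0(H) = 3$, $K_C = 4H$, $g=15$, $\gamma(C) = 6$), and to deduce part (1) from standard Brill--Noether dimension counts. I would first establish, for any irreducible component $Z \subseteq W^2_{15}(C)$ (of dimension at least $\rho(15,2,15) = 9$) and a generic $L \in Z$, three auxiliary facts: $h^0(L) = 3$, $|L|$ is base-point-free, and crucially $h^0(3H - L) = 0$. The first holds because $W^3_{15} \cong W^2_{13}$ (via $L \mapsto K_C - L$) has dimension at most $6$ on a plane septic (its main component being $\{H + D : D \in \Sym^6 C\}$), so is strictly smaller than $Z$. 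The third holds similarly, since the locus $\{3H - D : D \in \Sym^6 C\} \subset \pic^{15}$ has dimension at most $6 < \dim Z$, so generic $L \in Z$ lies outside.

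The heart of the proof is the instability statement. Consider the multiplication map
\[
\mu_H \colon H^0(L) \otimes H^0(H) \longrightarrow H^0(L \otimes H),
\]
whose source has dimension $9$. By Riemann--Roch and the vanishing $h^0(K_C - L - H) = h^0(3H - L) = 0$, its target has dimension $\chi(L + H) = 22 + 1 - 15 = 8$. Hence $\dim \ker \mu_H \geq 1$. Tensoring the defining sequence $0 \to M_L \to H^0(L) \otimes \oo_C \to L \to 0$ by $H$ and passing to global sections identifies $\ker \mu_H$ with $H^0(M_L \otimes H) = \Hom(H^{-1}, M_L)$. Any nonzero element provides an injective sheaf map $H^{-1} \hookrightarrow M_L$, whose saturation is a sub-line bundle of $M_L$ of degree at least $-7$, strictly greater than the slope $\mu(M_L) = -15/2$. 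Thus $M_L$ is not semistable.

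For part (1), since $\dim H^0(L) = 3$ the only proper subspaces to test are $2$-dimensional pencils $W \subset H^0(L)$, each generating $L^{\prime} = L(-F)$ for some base divisor $F$; linear stability is equivalent to $\deg F \leq 7$. Since $\gamma(C) = 6$, only the cases $\deg F \in \{8, 9\}$ require exclusion: either $L = (H - p) + F$ with $F \in \Sym^9 C$ (using $W^1_6 = \{H - p : p \in C\} \cong C$), or $L = M + F$ with $M \in W^1_7$ and $F \in \Sym^8 C$ (using $\dim W^1_7 = 2$, parametrized by the locus $\{H + (p - q)\}$). Both decomposition loci have dimension at most $10$ in $\pic^{15}$, so their intersection with $Z$ has expected dimension at most $4 < \dim Z$, and generic $L \in Z$ avoids them. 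The main technical obstacle is to verify that these intersection estimates are strict for every component $Z$ — i.e., that no component of $W^2_{15}(C)$ is contained in the relevant excess sub-loci of $\pic^{15}$ — which ultimately follows from $\dim Z \geq 9$ exceeding the dimensions of those sub-loci.
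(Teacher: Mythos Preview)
The statement you were asked to prove is Conjecture~\ref{gonconj}, which is \emph{not} proven in the paper: it is stated as an open conjecture and no argument for it is offered. Your proposal does not address this conjecture at all. What you have written is instead an attempted proof of the main theorem in Section~4 (the counterexample on smooth plane septics). That theorem concerns line bundles with $\deg L = 15 > \gamma(C)\cdot \rk|V| = 12$, so it lies \emph{outside} the hypotheses of Conjecture~\ref{gonconj}; the paper explicitly remarks that these examples do not bear on this conjecture. In short, you have targeted the wrong statement.

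If one reads your proposal as an attempt at the Section~4 theorem, then your argument for part~(2) is essentially the paper's own: tensor the defining sequence of $M_L$ by $B = \oo_C(1)$, use $h^0(3H - L)=0$ for generic $L$ to compute $h^0(L\otimes B)=8<9$, and obtain a nonzero map $B^{-1}\hookrightarrow M_L$, destabilising since $-7 > -15/2$. This matches the paper exactly.

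Your approach to part~(1), however, diverges from the paper and contains a genuine gap. The paper deduces linear stability from Theorem~\ref{voisinconsequence}, which relies on Voisin's arguments (via Lemmas~\ref{mumfordlemma} and \ref{keemlemma}) to show that for generic $D\in W^2_g$ the map $\varphi_D$ is birational with only double points, and then invokes Lemma~\ref{lemmamult}. You instead attempt a direct dimension count, arguing that the ``bad'' loci $\{(H-p)+F: p\in C,\ F\in C_{(9)}\}$ and $\{M+F: M\in W^1_7,\ F\in C_{(8)}\}$ have dimension at most $10$ in $\pic^{15}$, and conclude that no component $Z\subseteq W^2_{15}$ can lie inside them because $\dim Z\geqslant 9$. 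But $9\leqslant 10$, so this inequality alone does not prevent $Z$ from being contained in a $10$-dimensional locus; your final sentence asserts precisely the step that needs justification. The paper's route via Voisin's analysis is what actually controls this, and your shortcut does not close the gap.
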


Concerning complete linear series, they conjecture that equivalence always holds in this case.

\begin{conjecture}
\label{complconj}
If $V= H^0(C, L)$ is a complete base point free linear series,  
then linear (semi)stability of the  linear series $|L|$ is equivalent to (semi)stability of $M_{L}$.

\end{conjecture}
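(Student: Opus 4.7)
The plan is to prove (2) via a syzygy argument producing an explicit destabilizing line subbundle of $M_L$, and to prove (1) via a dimension count showing that the locus of linearly unstable $L$'s forms a proper sub-variety of each component of $W^2_{15}(C)$.

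\textbf{Part (2): instability of $M_L$.} For a generic $L$ in any component of $W^2_{15}(C)$, upper semi-continuity gives $h^0(L) = 3$. I consider the multiplication map
\[
\mu_L \colon H^0(C, \oo_C(1)) \otimes H^0(C, L) \longrightarrow H^0(C, L\otimes \oo_C(1)).
\]
The source has dimension $3\cdot 3 = 9$. Since $\deg(L\otimes \oo_C(1)) = 22$, Riemann--Roch and Serre duality yield $h^0(L\otimes \oo_C(1)) = 8 + h^0(\oo_C(3)-L)$. The locus $\{L \in \pic^{15}(C) : h^0(\oo_C(3)-L)\geq 1\} = \oo_C(3) - W^0_6(C)$ has dimension $6$, whereas every component of $W^2_{15}(C)$ has dimension at least $\rho(15,2,15) = 9$; so a generic $L$ satisfies $h^0(L\otimes \oo_C(1)) = 8$ and $\ker \mu_L \neq 0$. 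Any nonzero $\xi \in \ker \mu_L$ corresponds to a nonzero sheaf map $\oo_C(-1) \to H^0(L)\otimes \oo_C$ whose composition with the evaluation vanishes, hence factors through $M_L$. The saturation of its image is a line subbundle of $M_L$ of degree $\geq -7 > -15/2 = \mu(M_L)$, so $M_L$ is not semistable.

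\textbf{Part (1): linear stability of $|L|$.} Since $h^0(L) = 3$, linear instability of $|L|$ amounts to the existence of an effective divisor $B$ with $\deg B \geq 8$ and $h^0(L-B) \geq 2$, i.e., $L - B \in W^1_k(C)$ for some $k \leq 7$. On a smooth plane septic, known rigidity results on Brill--Noether loci of plane curves classify the low-degree pencils: $W^1_6(C) = \{\oo_C(1)-p\,:\,p\in C\}$ is $1$-dimensional, and $W^1_7(C)$ is the $2$-dimensional family $\{\oo_C(1)+q-p\}_{(p,q)\in C\times C}$. Thus the linearly unstable $L$'s lie in $(W^1_6 + C^{(9)}) \cup (W^1_7 + C^{(8)}) \subset \pic^{15}(C)$, each summand of dimension at most $10$. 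Intersecting with $W^2_{15}(C)$: an $L = \oo_C(1) - p + D$ belongs to $W^2_{15}(C)$ iff $\oo_C(3) + p - D \in W^1_{13}(C)$ (Serre duality). The $10$-dimensional image of $C\times C^{(9)} \to \pic^{13}(C)$, $(p,D)\mapsto \oo_C(3)+p-D$, is not contained in the $9$-dimensional $W^1_{13}(C)$, since a generic such $D$ imposes $9$ independent conditions on the $10$-dimensional $|\oo_C(3)+p|$ and yields $h^0(\oo_C(3)+p-D)=1$. A transversality count in the $15$-dimensional $\pic^{13}(C)$ therefore bounds the resulting bad sub-locus of $W^2_{15}(C)$ by $10 + 9 - 15 = 4$; the $W^1_7$-contribution is handled analogously. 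Since every component of $W^2_{15}(C)$ has dimension $\geq 9 > 4$, the generic $L$ is linearly stable.

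The main obstacle is the dimension/transversality step in Part (1). One must verify that on a plane septic, $W^1_{13}(C)$ has the expected dimension $9$ (in particular, that the sub-locus $W^4_{13}(C) = \{\oo_C(2)-p\}$ coming from $|\oo_C(2)|$ does not inflate it to a larger component), and that the Abel--Jacobi translate meets it properly in $\pic^{13}(C)$; without the latter, the bad sub-locus could in principle absorb a whole component of $W^2_{15}(C)$. The rigidity classification of $g^1_{\leq 7}$'s on plane septics must also be invoked. These facts, classical for smooth plane curves, constitute the technical heart of the argument.
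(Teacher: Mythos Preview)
Your Part~(2) is essentially identical to the paper's argument: tensor the defining sequence of $M_L$ by $B=\oo_C(1)$, compare $h^0(L)\cdot h^0(B)=9$ with $h^0(L\otimes B)=8$ (the latter via $h^1(L\otimes B)=h^0(\omega_C\otimes B^{-1}\otimes L^{-1})=0$ by the dimension count $\dim(\omega_C\otimes B^{-1}-W^0_6)=6<9$), and conclude $H^0(M_L\otimes B)\neq 0$, giving $B^{-1}\hookrightarrow M_L$ with $\mu(B^{-1})=-7>-15/2$.

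Your Part~(1), however, diverges sharply from the paper and carries the gap you yourself flag. The paper does \emph{not} classify low-degree pencils on plane septics or attempt any transversality argument in $\pic^{13}(C)$. Instead it invokes a general result (Theorem~\ref{voisinconsequence}, following Voisin) valid on \emph{any} curve of genus $g\geqslant 11$ and gonality $\gamma\geqslant 5$: for generic $L$ in a component of $W^2_g(C)$, the morphism $\varphi_L\colon C\to\overline{C}\subset\PP^2$ is birational and every point of $\overline{C}$ has multiplicity at most~$2$, whence $|L|$ is linearly stable by Lemma~\ref{lemmamult}. The proof runs through an incidence scheme
\[
Y=\{(D',x,y):|D'|\text{ a complete }g^1_{g-2},\ |D'+x+y|\text{ and }|K_C-D'-x-y|\text{ base-point-free}\}
\]
with projections to $W^1_{g-2}$ and $W^2_g$, and uses Mumford's and Keem's bounds on $\dim W^r_d$ to control the fibres; the key step is showing that any component of $Y$ dominating a component of $W^2_g$ also dominates a component of $W^1_{g-2}$, which forces $D'=D-x-y$ to be base-point-free and hence $m_{\varphi_D(x)}(\overline{C})\leqslant 2$. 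No plane-curve input is used; the plane-septic hypothesis enters only in Part~(2).

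Your proposed route would require two nontrivial inputs you have not supplied: a complete description of $W^1_7(C)$ on a smooth plane septic (the family $\{\oo_C(1)+q-p\}$ is certainly contained in it, but ruling out further components needs an argument), and, more seriously, the properness of the intersection of the Abel--Jacobi image of $C\times C^{(9)}$ with $W^1_{13}(C)$ inside $\pic^{13}(C)$. The numerology $10+9-15=4$ is only an expected dimension; nothing prevents a whole $9$-dimensional component of $W^2_{15}(C)$ from sitting inside $W^1_6+C^{(9)}$ absent a genuine transversality statement, and you give no mechanism to establish one. The paper's incidence-variety method sidesteps both issues entirely.
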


The first and third named author proved in \cite{castorenatorres} that Conjecture 
\ref{complconj} does hold in the two opposite cases: 
when $C$ is a hyperelliptic curve and when $C$ is a Brill-Noether-Petri general curve.

The aim of this work is to show that Conjecture \ref{complconj} does not hold in general:
we can give counterexamples on any smooth plane curve of degree $7$ (so genus $15$ and gonality $6$). In particular
we will show that  on such a curve $C$
a generic line bundle in any component of the (non-empty) Brill-Noether locus $W^2_{15}(C)$ 
is globally generated, has $h^0(C,L)=3$, is linearly stable, 
but $M_{L}$ is a rank 2 vector bundle which is not semistable, hence not stable.

These constructions do not contradict Conjecture \ref{gonconj} however,
as they provide line bundles $L$ with $\deg L = 15 > \gamma (C) \cdot \rk |V| = 12$.

The techniques used follow closely C. Voisin's work in \cite{voisin}.

\section{Rank 2 linear series on higher gonality curves}

In this section we construct rank $2$ complete linear series on curves with high gonality,
and show that they are linearly stable. Most of the results we will make use of are proven in 
Voisin's work \cite{voisin}, we give details on the constructions provided there
for clarity and for better understanding of the following proofs. 

The main results are obtained as consequences of the following lemmas, to be found in \cite{mumford} and \cite{keem}:

\begin{lemma}[Mumford]
\label{mumfordlemma}

Let $C$ be a  non-hyperelliptic curve of genus $g \geqslant 4$.
Let $d,r$ be two integers such that $2 \leqslant d \leqslant g-2$ and $0 < 2r \leqslant d$.
If $\dim W^r_d (C) \geqslant d -2r -1$ then $C$ is trigonal or bielliptic or a smooth plane quintic.
In particular $\gamma(C) \leqslant 4$.

\end{lemma}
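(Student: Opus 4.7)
The plan is to combine Martens' dimension bound with a careful classification of the equality case, following Mumford's original arguments as developed further by Keem.

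First, under the stated hypotheses (non-hyperelliptic $C$, $g \geqslant 4$, $2 \leqslant d \leqslant g-2$, $0 < 2r \leqslant d$), Martens' theorem furnishes the sharp bound $\dim W^r_d(C) \leqslant d - 2r - 1$. The assumption of the lemma is that this bound is attained, so I would fix a component $Z \subseteq W^r_d(C)$ of dimension exactly $d-2r-1$ on which the generic member $L$ has $h^0(L) = r+1$ and is base point free (base points can be subtracted, reducing the degree and contradicting the Martens bound at lower degree otherwise).

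Second, I would set up a downward induction on $r$ via the map $Z \times C \to W^{r-1}_{d-1}(C)$, $(L, p) \mapsto L(-p)$. Since a generic $L \in Z$ is globally generated, subtracting a generic point strictly drops $h^0$ by one and the image lies in $W^{r-1}_{d-1}(C)$. A dimension count shows the image is a component of dimension $\geqslant (d-1) - 2(r-1) - 1$, hence again extremal, and by induction we may assume that $C$ already falls into one of the three classes, or equivalently push the argument down to the base case $r=1$.

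The base case $r = 1$ is the crux: one must classify curves $C$ carrying a family of $g^1_d$'s of dimension $d-3$. Here the plan is to invoke the base-point-free pencil trick applied to two general pencils $|V_1|, |V_2|$ in the family, analyzing the kernel of the multiplication map $V_1 \otimes V_2 \to H^0(L_1 \otimes L_2)$, and to combine this with Clifford's inequality to force a low-gonality structure or a double cover onto a curve of small genus. The resulting list of geometric configurations, after excluding the hyperelliptic case by hypothesis, reduces to the three named classes.

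The main obstacle is precisely this case analysis at $r = 1$, and in particular the appearance of the smooth plane quintic: a smooth plane quintic $C \subset \PP^2$ has genus $6$ and carries a $g^2_5$ whose residual series produces an extremal $(d-3)$-dimensional family of pencils, so this possibility cannot be excluded by dimension alone but must be singled out through the geometry of the adjoint (canonical) linear system. Once the three cases are established, the bound $\gamma(C) \leqslant 4$ follows immediately: trigonal curves have $\gamma = 3$, smooth plane quintics have $\gamma = 4$ by projection from a point of $C$, and bielliptic curves of genus $\geqslant 4$ admit a degree-$4$ pencil pulled back from a $g^1_2$ on the elliptic quotient.
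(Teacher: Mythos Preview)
The paper does not prove this lemma at all: it is stated with attribution to Mumford and cited from \cite{mumford} (the companion Lemma~\ref{keemlemma} is likewise quoted from \cite{keem}), and then used as a black box in the proofs of Theorems~\ref{voisinthm} and~\ref{voisinconsequence}. There is therefore no in-paper argument to compare your proposal against.

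That said, your outline is the classical one (Martens' bound, reduction to $r=1$ by subtracting points, analysis of the extremal family of pencils), essentially as in Mumford's original paper and in ACGH, Chapter~IV, Theorem~5.2. One point worth tightening: in the base case you propose applying the base-point-free pencil trick to \emph{two} general pencils $|V_1|,|V_2|$ from the family, but the standard route applies it to a single base-point-free $g^1_d$, say $|L|$, against its Serre residual $|K_C\otimes L^{-1}|$, studying the multiplication map $H^0(L)\otimes H^0(K_C\otimes L^{-1})\to H^0(K_C)$; extremality of the dimension forces this map to have a kernel, which yields a divisor of unexpectedly low degree moving in a pencil and drives the trichotomy. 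Your two-pencil variant is not wrong, but it needs an extra step linking $L_1\otimes L_2$ back to the canonical series before Clifford can bite. The closing gonality statement is fine: trigonal gives $\gamma=3$, a smooth plane quintic has $\gamma=4$ via projection from a point of $C$, and a bielliptic curve of genus $\geqslant 4$ pulls back a $g^1_2$ on the elliptic base to a $g^1_4$.
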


\begin{lemma}[Keem]
\label{keemlemma}

Let $C$ be a   curve of genus $g \geqslant 11$.
Let $d,r$ be two integers such that $4 \leqslant d \leqslant g +r-4$ and $r>0$.
If $\dim W^r_d (C) \geqslant d -2r -2 \geqslant 0$ then  $\gamma(C) \leqslant 4$.

\end{lemma}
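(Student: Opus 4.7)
The plan is to reduce to Mumford's Lemma (Lemma~\ref{mumfordlemma}) via an analysis of the base locus of the generic element of a component of $W^r_d(C)$ of maximal dimension, and to iterate the argument using residual linear series and restriction to smaller rank series when the generic member is base-point-free.

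Let $X \subseteq W^r_d(C)$ be an irreducible component of dimension at least $d - 2r - 2$, and let $L \in X$ be a generic element. I would distinguish according to whether $L$ has base points. If a common base divisor $B$ of degree $k \geq 1$ is shared by every element of $X$, then translation by $-B$ embeds $X$ into $W^r_{d-k}(C)$ as a component of the same dimension, and
\[
\dim (X - B) \;\geq\; d - 2r - 2 \;\geq\; (d - k) - 2r - 1
\]
for every $k \geq 1$, so Mumford's Lemma applies to the translated component provided the numerical conditions $2 \leq d - k \leq g - 2$ and $0 < 2r \leq d - k$ hold. The lower bound $d - k \geq 2r$ is forced by Clifford's inequality applied to $L(-B)$; the upper bound $d - k \leq g - 2$ follows from $d \leq g + r - 4$ when $r$ is small, and when it fails I would first replace $L$ by its Serre dual $K_C - L$ to return to the Mumford range. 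In either case Mumford yields $\gamma(C) \leq 4$, since $g \geq 11$ excludes the smooth plane quintic exception.

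When the generic $L \in X$ is base-point-free, I would produce an incidence variety
\[
\Sigma = \{\,(M, p, q) \in X \times C \times C \ \mid\ h^0(M(-p - q)) \geq r\,\}
\]
and project it onto $W^{r-1}_{d-2}(C)$ via $(M, p, q) \mapsto M(-p-q)$. Base-point-freeness and genericity ensure that the fibres of this projection are essentially pairs giving the same divisor in $|M|$, hence of controlled dimension, so the image carries a component of dimension at least $(d - 2) - 2(r-1) - 2$. This opens an induction on $r$. The base case $r = 1$ reads $\dim W^1_d(C) \geq d - 4$ with $4 \leq d \leq g - 3$; here one argues directly, via the Castelnuovo--Severi inequality applied to two generic pencils in the component, that they factor through a common covering of degree at most $4$, so $\gamma(C) \leq 4$.

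The main obstacle, I expect, will be controlling the dimension of the image of the projection $\Sigma \to W^{r-1}_{d-2}(C)$ in the base-point-free case: a priori the map could have large fibres if many distinct pairs $(p, q)$ land on the same line bundle, which would cause the image to drop below the Keem threshold needed to invoke the inductive hypothesis. Ruling this out requires exploiting generic base-point-freeness together with an irreducibility argument on the fibres of $\pi$, and the hypothesis $g \geq 11$ is what provides the slack to exclude pathological small-genus components. A secondary delicacy is the bookkeeping of the Mumford numerical hypotheses under the reductions $L \mapsto L(-B)$ and $L \mapsto K_C - L$: one must check at every step of the induction that the new pair $(d', r')$ remains within the admissible range $2 \leq d' \leq g - 2$, $0 < 2r' \leq d'$, a check that ultimately relies on the inequality $d \leq g + r - 4$ in the statement.
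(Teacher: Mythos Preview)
The paper does not prove this lemma at all: it is stated as a result from the literature, attributed to Keem and cited as \cite{keem}, in the same way that Lemma~\ref{mumfordlemma} is attributed to Mumford. There is therefore no proof in the paper to compare your proposal against.

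For what it is worth, your sketch is in the spirit of Keem's original argument (reduction via base loci and passage to residual or lower-rank series, ultimately leaning on Mumford's bound), but as you yourself note, the proposal has real gaps. In particular, your treatment of the base-point case is not quite right: it is not true that a common base divisor $B$ is shared by every element of $X$; the base locus varies with $L$, so the ``translation by $-B$'' you describe is not a well-defined morphism on $X$. The correct bookkeeping goes through an incidence correspondence $\{(L,p): p \text{ a base point of } L\}$ and a dimension count on its image in $W^r_{d-1}(C)$, which only yields $\dim W^r_{d-1}(C) \geq (d-1) - 2r - 2$ and thus requires an induction on $d$ rather than a one-step reduction to Mumford. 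Similarly, in the base-point-free case the control of the fibres of $\Sigma \to W^{r-1}_{d-2}(C)$ that you flag as the main obstacle is indeed the substantive part of Keem's argument, and your base case $r=1$ via Castelnuovo--Severi is not how Keem proceeds. Since the paper simply quotes the result, none of this bears on the comparison you were asked to make.
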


\begin{theorem}[Voisin]
\label{voisinthm}

Let $C$ be a  curve of genus $g \geqslant 11$ and gonality $\gamma \geqslant 5$.
Then a generic element $D$ in any component of $W^1_{g-2} (C)$ satisfies:
\begin{enumerate}

\item $|D|$ is base point free and $h^0(D) =2$.

\item $|K_C -D|$ is base point free and $h^0(K_C -D) =3$.

\end{enumerate}  

\end{theorem}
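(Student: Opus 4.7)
The strategy is a dimension count on Brill--Noether loci, combined with Riemann--Roch and the upper bounds on $\dim W^r_d(C)$ provided by Lemma \ref{keemlemma} under the hypotheses $\gamma \geq 5$ and $g \geq 11$. First, since $\deg D = g-2$, Riemann--Roch gives $h^0(D) = h^0(K_C - D) - 1$, so the equality $h^0(K_C - D) = 3$ in (ii) reduces to $h^0(D) = 2$ in (i). Any component of $W^1_{g-2}(C)$ has dimension at least $\rho(g-2,1,g) = g-6$. Applying Keem's Lemma with $(r,d) = (2, g-2)$ (all hypotheses are satisfied since $d = g-2 = g+r-4$ and $d-2r-2 = g-8 \geq 0$), one obtains $\dim W^2_{g-2}(C) \leq g - 9 < g - 6$, so no component of $W^1_{g-2}$ is contained in $W^2_{g-2}$; hence a generic $D$ has $h^0(D) = 2$.

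For base-point freeness of $|D|$, the locus of $D \in W^1_{g-2}$ with a base point is contained in the image of $W^1_{g-3} \times C \to W^1_{g-2}$ via $(D', p) \mapsto D' + p$, of dimension at most $\dim W^1_{g-3} + 1$. Keem applied to $(r,d) = (1, g-3)$ yields $\dim W^1_{g-3} \leq g - 8$, so the bad locus has dimension $\leq g - 7 < g - 6$, and generic $D$ has $|D|$ base-point free.

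For the base-point freeness of $|K_C - D|$, observe that $|K_C - D|$ has a base point at $p$ iff $h^0(K_C - D - p) = h^0(K_C - D) = 3$, iff (by Riemann--Roch applied to $D + p$) $h^0(D+p) = 3$, iff $D + p \in W^2_{g-1}(C)$. The bad locus in $W^1_{g-2}$ is therefore contained in the image of the map $W^2_{g-1} \times C \to W^1_{g-2}$, $(F, p) \mapsto F - p$, so its dimension is at most $\dim W^2_{g-1} + 1$; the proof will be complete once $\dim W^2_{g-1}(C) \leq g - 8$ is established.

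This last estimate is the main obstacle, since $(r,d) = (2, g-1)$ lies just outside the range $d \leq g+r-4 = g-2$ of Keem's Lemma. The plan is to decompose $W^2_{g-1}$ using the self-duality $F \leftrightarrow K_C - F$ (both sides have degree $g-1$ and equal $h^0$ by Riemann--Roch): if $|F|$ has a base point then $F \in W^2_{g-2} + C$, and if $|K_C - F|$ has a base point then $F \in K_C - (W^2_{g-2} + C)$; in both cases Step~1's bound gives dimension at most $\dim W^2_{g-2} + 1 \leq g - 8$. The remaining case, in which both $|F|$ and $|K_C - F|$ are base-point-free linear series of degree $g - 1$ with $h^0 = 3$, is the delicate one: it is handled along the lines of Voisin's analysis in \cite{voisin}, exploiting the gonality hypothesis $\gamma \geq 5$ to rule out large families of complementary base-point-free $g^2_{g-1}$'s on a curve of genus $g \geq 11$. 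Combining the three cases yields $\dim W^2_{g-1}(C) \leq g - 8$, and the proof is concluded.
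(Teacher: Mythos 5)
Your reductions are sound and, up to the point where you stop, they follow essentially the same route as the paper: point (i) via Keem's bounds $\dim W^2_{g-2}\leqslant g-9$ and $\dim W^1_{g-3}\leqslant g-8$ against the lower bound $g-6$ for every component of $W^1_{g-2}$, and point (ii) reduced, via Riemann--Roch and an addition/subtraction-of-a-point count, to the single estimate $\dim W^2_{g-1}(C)\leqslant g-8$ (the paper phrases this through the Serre-dual locus $W^2_g$ and the sublocus $W^2_{g-1}+(C)$, but the content is identical). The problem is that this estimate is not a known input you may quote: it is precisely the ``Claim'' that constitutes the bulk of the paper's proof, and your treatment of the delicate case --- components whose generic $F$ has both $|F|$ and $|K_C-F|$ base point free $g^2_{g-1}$'s --- is a single sentence deferring to ``Voisin's analysis,'' with no argument. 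Since $(r,d)=(2,g-1)$ falls outside the range of Lemma \ref{keemlemma}, nothing you have established rules out a component of $W^2_{g-1}$ of dimension $g-7$ consisting of such doubly base-point-free series, so your proof is incomplete exactly at its crucial step.

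What is missing is the following chain, which the paper carries out in detail. Assuming a component $X\subseteq W^2_{g-1}$ of dimension $\geqslant g-7$, one notes $\varphi_F\colon C\to\PP^2$ cannot be an embedding (a smooth plane curve of degree $g-1$ cannot have genus $g$ for $g\geqslant 11$), so one can write a generic $\Delta\in X$ as $\Delta'+p+q$ with $\Delta'\in W^1_{g-3}$ and $|K_C-\Delta'|$ a base point free $g^3_{g+1}$; one then introduces the incidence scheme $Y$ of triples $(\Delta',x,y)$ and compares the two projections. Because $\dim W^1_{g-3}\leqslant g-8 < \dim X$, the generic fibre of $pr_1$ on a component dominating $X$ must be $1$-dimensional, which forces $\varphi_{K_C-\Delta'}$ to be a degree $2$ map onto a curve $\overline{C}\subset\PP^3$ (fibres cannot contain more than two points since $|K_C-\Delta'-p-q|$ is base point free). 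Passing to the normalization $\widetilde{C}$, the map $\widetilde{\varphi}\colon C\to\widetilde{C}$ is independent of $\Delta'$, $K_C-\Delta$ descends to a $g^2_{(g-1)/2}$ on $\widetilde{C}$, and Riemann--Hurwitz gives $g'\leqslant (g+1)/2$; hence $g-7\leqslant \dim W^2_{(g-1)/2}(\widetilde{C})\leqslant g'\leqslant (g+1)/2$, forcing $g$ odd with $11\leqslant g\leqslant 15$, and each of the cases $g=11,13,15$ is then excluded by elementary Brill--Noether considerations on $\widetilde{C}$ (e.g.\ $g=15$ would give $W^2_7(\widetilde{C})=\mathrm{Pic}^7(\widetilde{C})$). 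Without reconstructing this argument (or an equivalent one), the bound $\dim W^2_{g-1}\leqslant g-8$, and with it point (ii) of Theorem \ref{voisinthm}, remains unproved; note also that the statement is not simply quotable from \cite{voisin} in the form you need, which is why the paper supplies the proof.
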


\begin{proof}

The expected dimension of $W^1_{g-2}(C)$ is $\rho(g-2, 1, g) = g-6$, and according to Mumford's 
Lemma \ref{mumfordlemma} above, as the gonality of the curve is $\gamma \geqslant 5$,
no component has dimension greater than or equal to $g-5$.

Applying Keem's Lemma \ref{keemlemma} above, 
$\dim W^1_{g-3}(C) \leqslant g-8$. Therefore if we denote
\[
W^1_{g-3} + (C) := \{ P \in \mathrm{Pic}^{g-2}(C) ~|~ P = Q \otimes \oo_C(p) ~,~ Q \in W^1_{g-3}(C) ~,~ p\in C   \}
\]
then $W^1_{g-3} + (C)$ has empty interior in 
$W^1_{g-2}(C)$ by a dimensional count.

Now an element $L \in W^1_{g-2}(C)$ lies in $W^1_{g-3} + (C)$ if either it has 
$h^0(L) \geqslant 3$ or it has a base point. Therefore a general element of a component 
of $W^1_{g-2}(C)$ is base point free and has $h^0(L) =2$, and this proves the first point.

In order to prove the second point, we  prove the following

\noindent
\textbf{Claim:} every component of $W^2_{g-1}(C)$ has dimension at most $g-8$.

If the Claim holds, then we can proceed as above: set
\[
W^2_{g-1} + (C) := \{ P \in \mathrm{Pic}^{g}(C) ~|~ P = Q \otimes \oo_C(p) ~,~ Q \in W^2_{g-1}(C) ~,~ p\in C   \} ~,
\]
then, as every component of $W^2_g (C)$ has dimension at least 
$\rho (g,2,g) = g-6$,
$W^2_{g-1} + (C)$ has empty interior in $W^2_g (C)$ as it has smaller dimension.
Therefore a generic point  $F$ in a component of $W^2_g(C)$ does not lie 
in $W^2_{g-1} + (C)$ so it is base point free and has rank $2$.

As the application $P \mapsto P^* \otimes \omega_C$ 
is an isomorphism $\mathrm{Pic}^g(C) \to \mathrm{Pic}^{g-2}(C)$ which 
restricts to an isomorphism $ W^2_g(C) \to W^1_{g-2} (C)$,
then a generic point in a component of $W^1_{g-2}(C)$ correspond
to a generic point in a component of 
$W^2_g (C)$ and the second point is proven.

\noindent
\emph{Proof of the Claim.} By contradiction, suppose there is a component $X$ of 
$W^2_{g-1}(C)$ of dimension greater than or equal to $g-7$. Then a general element of such a component 
does not lie in $W^2_{g-2} + (C)$, as by Keem's Lemma 
(Lemma \ref{keemlemma} above)
$\dim W^2_{g-2} \leqslant g-9$.
Therefore a generic $L \in X$ is base point free and has $h^0(C, L) =3$.
For the same argument,
as $L^* \otimes \omega_C$ varies in a component of $W^2_{g-1}$ of dimension 
at least $g-7$ as well, then for generic such $L$ we have
 $L^* \otimes \omega_C$ base point free and with $h^0(C, L^* \otimes \omega_C) =3$.

Now, let us consider the morphism $\varphi_L \colon C \to \PP^2$ induced by $L$. It cannot be an immersion, otherwise its image would be a plane curve 
of degree $g-1$ and genus $g$, which is impossible for $g\geqslant 11$.
Therefore there is a couple $(p,q) \in C^2$ such that
$H^0(C, L(-p -q)) = H^0(C, L(-p)) = H^0(C, L(-q)) \cong \CC^2$.
In the following, let us use divisors.
Let us choose a divisor $\Delta \in |L|$ and let us 
denote $\Delta^{\prime} := \Delta -p -q$,
then $\oo_C({\Delta}^{\prime}) = L(-p -q) \in W^1_{g-3}$. 

Furthermore,
as both $|\Delta|$ and $|K_C - \Delta|$ are base point free,
we can see that $|K_C - \Delta^{\prime}|= |K_C - \Delta +p+q|$ is base point free as well:
in fact the only base points could be $p, q \in C$,
however, as  $h^0(C, L(-p -q)) = h^0(C, L(-p)) = h^0(C, L(-q)) =2$,
applying Riemann-Roch we have that 
$h^0(K_C - \Delta +p) = h^0(K_C - \Delta +q) =3$,
and $h^0(K_C - \Delta +p+q) =4$. 
Therefore we have 
a base point free linear system $K_C - \Delta^{\prime}$,
let us consider the induced map: 
\[
\varphi_{K_C - \Delta^{\prime}} \colon 
C \to \PP^3 ~	,
\]
then either this map is birational, and therefore there is
a finite number of couples $(x,y) \in C^2$ such that
\[
h^0(K_C - \Delta^{\prime} -x-y) = 
h^0( K_C - \Delta^{\prime} -x) =  3
\]
and so 
\[
h^0( \Delta^{\prime} +x+y) = 3 = h^0({\Delta}^{\prime}) +1 ~;
\]
or the map is a degree $m$ morphism
\[
\varphi_{K_C - \Delta^{\prime}} \colon 
C \to \overline{C} \subset \PP^3 ~	,
\]
and in this case there are infinite couples 
$(x,y)$ satisfying 
\(
h^0( \Delta^{\prime} +x+y) = 3 = h^0({\Delta}^{\prime}) +1 
\),
these are all the couples contained in the fibers of 
$\varphi_{K_C - \Delta^{\prime}}$. However,
as $|K_C - \Delta^{\prime} -p -q| = |K_C - \Delta|$ is base point free,
we see
that such fibers cannot contain more than two points
(counted with multiplicity),
so the degree of  
$\varphi_{K_C - \Delta^{\prime}} $ must be $m=2$ if it is not a birational map,
and the set of such couples $(x,y)$ has dimension $1$.

Now, let us consider the following scheme:
\[
Y := \{ (\Delta^{\prime}, x, y)  ~|~  |\Delta^{\prime}| \textrm{ is a complete } g^1_{g-3} ~,~
|\Delta^{\prime} + x + y| \textrm{ and } |K_C - \Delta^{\prime} - x - y| 
\]
\[
\textrm{ are complete and base point free } g^2_{g-1} \} ~.
\]
According to the argument above,
there must be a component $Y_0$ dominating $X \subset W^2_{g-1}(C)$,
and the fibers of $pr_1 \colon  Y_0 \to W^1_{g-3}$ have dimension at most 1.
Now as $\dim X \geqslant g-7$ by hypothesis, and $\dim W^1_{g-3} \leqslant g-8$ by
Lemma \ref{keemlemma}, then we deduce that 
$pr_1(Y_0)$ must contain an open dense subset $U_0$ of a $(g-8)$-dimensional 
 component $W \subset W^1_{g-3}$,
and the generic fibers of $pr_1$ must be $1$-dimensional.

According to the description above, 
for 
$\Delta^{\prime} \in U_0$  the residual linear series
$K_C - \Delta^{\prime} $
 is a complete and base point free 
$g^3_{g+1}$ on $C$,
inducing a degree $2$ map 
\[
\varphi_{K_C - \Delta^{\prime}} \colon
C \to \overline{C} \subset \PP^3 ~.
\]

Let us consider the normalization 
$\nu \colon \widetilde{C} \to \overline{C}$,
and the map 
$\widetilde{\varphi} \colon C \to 
\widetilde{C}$. Then 
$\widetilde{C}$ has a complete linear series of rank 3 and degree
$\frac{g+1}{2}$ therefore it is not a rational curve.
Then when $\Delta^{\prime}$ varies in $U_0$ the curve 
$\widetilde{C}$ and the map $\widetilde{\varphi}$ are fixed.

Now for a generic point $K_C - \Delta$ in $K_C - X$,
we have $K_C - \Delta = K_C - (\Delta^{\prime} + x + y)$,
where $x,y$ are points in a fiber of $\widetilde{\varphi}$. 
Therefore $K_C - \Delta$ is the pull back through 
$\widetilde{\varphi}$ of a $g^2_{(g-1)/2}$ on $\widetilde{C}$.
So we have 
\[
\dim W^2_{(g-1)/2}(\widetilde{C}) \geqslant \dim X \geqslant g-7 ~.
\] 
By the Riemann-Hurwitz formula, 
the genus $g^{\prime} = g(\widetilde{C})$ satisfies
$g^{\prime} \leqslant (g+1)/2$. 
So  we  have the following inequalities:
\[
g-7 \leqslant g^{\prime} \leqslant (g+1)/2
\]
and therefore we have that $g$ is odd and satisfies 
$11\leqslant g \leqslant 15$ and $\widetilde{C}$ is a curve 
of genus $g^{\prime}$ such that 
 $g-7 \leqslant g^{\prime} \leqslant (g+1)/2$
 and that $\dim W^2_{(g-1)/2} (\widetilde{C}) \geqslant g-7$.

We can show that these inequalities cannot hold,
and so we have proven the claim. In fact,
according to the inequalities above,
we have the following cases:

\begin{enumerate}

\item if $g=15$ then $g^{\prime} =8$ and
$\dim W^2_{7} (\widetilde{C}) \geqslant 8$ which is impossible,
as it would imply that $ W^2_{7} (\widetilde{C}) = \mathrm{Pic}^7(\widetilde{C})$;
\item if $g =13$ then $g^{\prime} =7$ or $g^{\prime} =6$ and
$\dim W^2_{6} (\widetilde{C}) \geqslant 6$ which is impossible,
as  the case $g^{\prime} =6$  would imply that 
$ W^2_{6} (\widetilde{C}) = \mathrm{Pic}^6(\widetilde{C})$, and the case $g^{\prime} =7$
would imply that $ W^2_{6} (\widetilde{C}) \subseteq \mathrm{Pic}^6(\widetilde{C})$
has codimension at most 1 and is therefore equal to the theta divisor;
\item  if $g =11$ then $g^{\prime} =6$ or $g^{\prime} =5$ 
 or $g^{\prime} =4$ and
$\dim W^2_{5} (\widetilde{C}) \geqslant 4$ which is impossible for similar arguments.
\end{enumerate}

This completes the proof of the Claim and therefore of the Theorem.

\end{proof}

\begin{lemma}
\label{lemmamult}

Let $V \subseteq H^0(C, L)$ be a rank-$2$  linear series on $C$,
generating $L$, such that 
\[
\varphi_{|V|} \colon C \to \overline{C} \subset \PP^2
\]
is a birational morphism.

Then $|V|$ is linearly (semi)stable if and only if  all points $p \in \overline{C}$ 
have  multiplicity  $m_p(\overline{C}) < \deg L / 2$
(or $m_p(\overline{C}) \leqslant \deg L / 2$
for semistability).

\end{lemma}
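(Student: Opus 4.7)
The plan is to translate the linear (semi)stability condition into a condition on the geometry of $\overline{C} \subset \PP^2$. Since $\dim V = 3$ and $\rk |V| = 2$, the only non-trivial test involves subspaces $W \subset V$ with $\dim W = 2$ (and hence $\rk |W| = 1$): one must check $\deg L^{\prime} > \deg L / 2$ (resp.\ $\geqslant$) for the line bundle $L^{\prime}$ generated by $W$. I will identify such a $W$ with the point $p_W \in \PP^2 \cong \PP(V^*)$ defined as the common zero of the two linear forms spanning $W$; equivalently, $|W|$ is exactly the pencil of lines in $\PP^2$ through $p_W$. If $B$ denotes the base divisor of $|W|$ on $C$, then $B$ is supported on $\varphi_{|V|}^{-1}(p_W)$ and $\deg L^{\prime} = \deg L - \deg B$.

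The central step is to show $\deg B = m_{p_W}(\overline{C})$, with the convention $m_p(\overline{C}) = 0$ for $p \notin \overline{C}$. The case $p_W \notin \overline{C}$ is clear. If $p_W \in \overline{C}$ with multiplicity $m$, I will blow up $\PP^2$ at $p_W$: writing $\pi \colon \widetilde{\PP}^2 \to \PP^2$ for the blowup and $E$ for the exceptional divisor, the proper transform satisfies $\pi^* \overline{C} = \widetilde{\overline{C}} + m E$. Since $C$ is smooth, $\varphi_{|V|}$ lifts to $\widetilde{\varphi} \colon C \to \widetilde{\overline{C}}$, and the moving part of $|W|$ on $C$ is the pullback under $\widetilde{\varphi}$ of the fibration $\widetilde{\PP}^2 \to \PP^1$ induced by the pencil of lines through $p_W$. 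Computing the degree of the moving part via intersection with a generic fiber $F = \pi^*\ell - E$ yields
\[
\widetilde{\overline{C}} \cdot F = (\pi^* \overline{C} - m E) \cdot (\pi^* \ell - E) = \deg L - m ,
\]
where I have used $\pi^*\overline{C} \cdot E = \pi^*\ell \cdot E = 0$ and $E^2 = -1$. Hence $\deg B = m$ and $\deg L^{\prime} = \deg L - m_{p_W}(\overline{C})$.

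Putting the pieces together, for each $W$ the stability inequality becomes $m_{p_W}(\overline{C}) < \deg L / 2$; as $p_W$ ranges over all of $\PP^2$, this holds for every $W$ if and only if $m_p(\overline{C}) < \deg L / 2$ for every $p \in \overline{C}$, and the semistable case follows by replacing $<$ with $\leqslant$. I expect the main technical point to be the equality $\deg B = m_{p_W}(\overline{C})$, which follows from the blowup computation above but requires some care when $\overline{C}$ is singular at $p_W$ and several branches of $C$ sit over $p_W$; everything else is direct bookkeeping.
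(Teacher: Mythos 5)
Your proof is correct and follows essentially the same route as the paper: both identify the relevant subspaces $W$ (necessarily of dimension $2$) with points of $\PP^2 = \PP(V^*)$ via the pencil of lines through the point, and both reduce linear (semi)stability to the equality $\deg L^{\prime} = \deg L - m_{p}(\overline{C})$. The only difference is that the paper asserts this degree drop directly as the standard fact about projecting $\overline{C}$ from a point of multiplicity $m_p$, while you verify it by the blowup intersection computation, which is a welcome but not essentially different justification.
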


\begin{proof}

Since $\varphi$ is a birational morphism,
then $\deg L = \deg \varphi^* \oo_{\PP^2}(1) = \deg \overline{C}$.
The subspace $V \subseteq H^0(C, L)$ has dimension 3, and a 2-dimensional subspace 
$W \subset V$ corresponds to a point in $ q \in \PP^2 = \PP (V^*)$, and induces a projection map from this point:
\[
\pi \colon \PP(V^*) \setminus \{q\} \to \PP(W^*) \cong \PP^1 ~.
\]
This map composed with $\varphi_{|V|}$ and extended to $C$ is induced by the rank 1 linear subseries of $W \subset V$,
which  generates a line bundle $L^{\prime} \subseteq L$ of degree $\deg \overline{C} - m_q(\overline{C})$,
where $m_q (\overline{C}) =0$ if $q \notin \overline{C}$.
Therefore we have linear stability if and only if 
$\deg L - m_q(C) > \deg L /2$ for all $p \in \overline{C}$. Same for semistability.

\end{proof}

\begin{theorem}
\label{voisinconsequence}

Let $C$ be a  curve of genus $g \geqslant 11$ and gonality $\gamma \geqslant 5$.
Then a generic element $L$ in any component of $W^2_{g} (C)$ satisfies:

\begin{enumerate}

\item
\label{puntouno} The complete linear series $|L|$ is base point free and $h^0(C,L) =3$.

\item
\label{puntodue} The complete linear series $|L|$ induces a birational morphism
\(
\varphi_{L} \colon C \to \overline{C} \subset \PP^2
\),
where $\overline{C} \subset \PP^2$ is a singular curve of degree $g$.

\item
\label{puntotre} The complete linear series  $|L|$ is linearly stable.

\end{enumerate}

\end{theorem}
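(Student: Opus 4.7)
The plan is to deduce the three points in order, using Voisin's Theorem~\ref{voisinthm} via Serre duality for~(\ref{puntouno}), a base-divisor argument contradicting~(\ref{puntouno}) for~(\ref{puntodue}), and Lemma~\ref{lemmamult} combined with Keem's bound for~(\ref{puntotre}).

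For~(\ref{puntouno}), the map $L \mapsto \omega_C \otimes L^{-1}$ induces an isomorphism $W^2_g(C) \cong W^1_{g-2}(C)$. A generic $L$ in any component of $W^2_g(C)$ thus corresponds to a generic $D = K_C - L$ in a component of $W^1_{g-2}(C)$, and Theorem~\ref{voisinthm} applied to $D$ yields that $|L| = |K_C - D|$ is base point free with $h^0(L) = 3$.

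For~(\ref{puntodue}), the morphism $\varphi_L \colon C \to \PP^2$ provided by~(\ref{puntouno}) has image $\overline{C}$ of degree $g/m$ with $m = \deg \varphi_L$, so $m \mid g$. I suppose $m \geq 2$ and factor $\varphi_L = \nu \circ \pi$ where $\nu \colon \widetilde{C} \to \overline{C}$ is the normalization and $\pi \colon C \to \widetilde{C}$ has degree $m$; set $L = \pi^* M$ with $M \in W^2_{g/m}(\widetilde{C})$ and $g' = g(\widetilde{C})$. Since $m \mid g$ while an unramified cover would require $m \mid g - 1$ by Riemann-Hurwitz (hence $m \mid \gcd(g, g-1) = 1$), the ramification divisor $R$ of $\pi$ is strictly positive. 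From $K_C - L = \pi^*(K_{\widetilde{C}} - M) + R$, the inclusion $H^0(\pi^*(K_{\widetilde{C}} - M)) \hookrightarrow H^0(K_C - L)$ (tensoring by the canonical section of $\oo_C(R)$) combined with $h^0(K_C - L) = 2$ forces $h^0(K_{\widetilde{C}} - M) \leq 2$, hence by Riemann-Roch on $\widetilde{C}$ one has $g' \leq g/m$. If $g' = g/m$, a projection-formula computation using $\pi_* \oo_C = \oo_{\widetilde{C}} \oplus E$ shows $h^0(\pi^*(K_{\widetilde{C}} - M)) = 2 = h^0(K_C - L)$, so every section of $|K_C - L|$ vanishes on $R$, contradicting the base-point-freeness from~(\ref{puntouno}). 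If instead $g' < g/m$, the trivial bound $\dim W^2_{g/m}(\widetilde{C}) \leq g' \leq g/m - 1 \leq g/2 - 1 < g - 6 = \rho(g, 2, g)$ for $g \geq 11$ shows the pullback locus cannot fill a component of $W^2_g(C)$. Either way $\varphi_L$ must be birational, and $\deg \overline{C} = g$.

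For~(\ref{puntotre}), by Lemma~\ref{lemmamult} it suffices to bound the multiplicities of $\overline{C}$ strictly below $g/2$. If some $q \in \overline{C}$ has multiplicity $m \geq g/2$, projection from $q$ yields a pencil $B \in W^1_{g-m}(C)$ with $L = B \otimes \oo_C(D_q)$, where $D_q$ is effective of degree $m$ supported on $\nu^{-1}(q)$; the worst case (single preimage) parametrizes such $L$ by the image of $W^1_{g-m}(C) \times C \to \mathrm{Pic}^g(C)$, $(B, p) \mapsto B \otimes \oo_C(mp)$, cases with several preimages contributing only lower-dimensional strata. Keem's Lemma~\ref{keemlemma} under the hypothesis $\gamma \geq 5$ gives $\dim W^1_{g-m}(C) \leq g - m - 5$, so the locus of $L$ admitting such a singularity has dimension at most $g - m - 4 \leq g/2 - 4$, strictly less than $g - 6$ for $g \geq 11$ and hence strictly less than the dimension of any component of $W^2_g(C)$. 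Thus a generic $L$ in any component produces $\overline{C}$ with all multiplicities below $g/2$, and $|L|$ is linearly stable.

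The main technical step is the projection-formula computation in~(\ref{puntodue}): one must analyze the splitting $\pi_* \oo_C = \oo_{\widetilde{C}} \oplus E$ and use it, together with the sharp equality $h^0(K_C - L) = 2$, to prove $h^0((K_{\widetilde{C}} - M) \otimes E) = 0$ in the case $g' = g/m$. Once this vanishing is established, the dichotomy between the two subcases closes cleanly and reduces birationality to the base-point-freeness statement already supplied by Voisin's Theorem.
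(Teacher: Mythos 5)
Your argument for point~(\ref{puntotre}) has a genuine gap. The claim that singular points with several preimages contribute ``only lower-dimensional strata'' is backwards: if $q\in\overline{C}$ has multiplicity $m$ with $k$ distinct preimages, the corresponding line bundles lie in the image of $W^1_{g-m}(C)\times C^k\to \mathrm{Pic}^g(C)$, $(B,p_1,\dots,p_k)\mapsto B\otimes\oo_C(\sum a_ip_i)$, whose dimension \emph{grows} with $k$; for an ordinary $m$-fold point ($k=m$) Keem's bound only gives $\dim\leqslant (g-m-5)+m=g-5$, which is not smaller than $g-6$, the dimension of every component of $W^2_g(C)$. Indeed the locus $\{L:\exists D\geqslant 0,\ \deg D\geqslant g/2,\ h^0(L-D)\geqslant 2\}$ can genuinely have dimension $g-5$, so no count of this crude kind can succeed: one must exploit the condition $h^0(L)\geqslant 3$ and genericity \emph{inside the component}. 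The paper does this by introducing the incidence scheme $Y$ of triples $(D',x,y)$ and proving that every component of $Y$ dominating a component of $W^2_g$ via $(D',x,y)\mapsto D'+x+y$ also dominates a component of $W^1_{g-2}$ via $pr_1$; hence for generic $L$ every residual divisor $L-x-y$ with $\varphi_L(x)=\varphi_L(y)$ is a generic, therefore base point free, $g^1_{g-2}$, which forces every point of $\overline{C}$ to have multiplicity at most $2$, and only then is Lemma~\ref{lemmamult} applied. Your dimension count would have to be replaced by an argument of this type.

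Point~(\ref{puntouno}) is fine and agrees with the paper, and your subcase $g'=g/m$ of point~(\ref{puntodue}) is a nice, correct shortcut (note though that the contradiction is with base-point-freeness of $|K_C-L|$, which comes from Theorem~\ref{voisinthm}, not of $|L|$ as you wrote; also the vanishing $h^0((K_{\widetilde{C}}-M)\otimes E)=0$ is not needed, since $h^0(K_{\widetilde{C}}-M)=2$ already follows from Riemann--Roch and $h^0(M)=3$). But the subcase $g'<g/m$ has a gap: the bound $\dim W^2_{g/m}(\widetilde{C})\leqslant g'$ controls the pullback locus only for a \emph{fixed} covering $\pi\colon C\to\widetilde{C}$, whereas as $L$ moves in its component the covering may vary. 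You never show $m=2$ (the paper deduces this from base-point-freeness of $|K_C-D'-x-y|$) nor that $\widetilde{C}$ is irrational; in particular $\widetilde{C}\cong\PP^1$ with $\deg\overline{C}=2$ and $L=2F$, $F\in W^1_{g/2}(C)$, is not excluded, and there the coverings move in the possibly positive-dimensional family $W^1_{g/2}(C)$ while your bound only sees $\dim W^2_2(\PP^1)=0$; that case needs Keem's Lemma~\ref{keemlemma} applied to $W^1_{g/2}(C)$, and the cases $1\leqslant g'<g/m$ need a de Franchis-type rigidity statement, which is exactly the role of the paper's observation that $\widetilde{C}$ and $\widetilde{\varphi}$ do not vary with $D'$. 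So point~(\ref{puntodue}) is repairable but incomplete as written, while point~(\ref{puntotre}) requires a different argument.
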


\begin{proof}

The first point  follows from 
Theorem \ref{voisinthm} above. We have to show that such a generic element 
induces a birational map to its image and is linearly stable.

Let us prove that for a divisor $D \in W^2_{g}$ generic in an irreducible component of
$W^2_{g}$, the linear series $| D|$ induces a birational morphism 
$\varphi_{D} \colon C \to \overline{C} \subset \PP^2$.
Let us observe that by the first point the linear series 
$\lls{D}$ and $|K_C - D|$ are complete and base point free $g^2_g$
and
$g^1_{g-2}$.

As $\varphi_{D}$ cannot be an embedding, then there exist $p, q \in C$ such that 
$H^0(D -p ) = H^0(D - q )
 =H^0(D -p -q) \cong \CC^2$.  
 So the divisor $D^{\prime} := D - p - q$ satisfies:

\begin{enumerate}

\item $|D^{\prime}|$ is a complete $g^1_{g-2}$;
\item $|K_C - D^{\prime}|$ is a complete base point free $g^2_{g}$;
\item $|D^{\prime} +p +q|$ is a complete base point free $g^2_{g}$;
\item $|K_C - D^{\prime} -p -q|$ is a complete base point free $g^1_{g-2}$.
\end{enumerate}
Furthermore, two points $p,q \in C$ satisfy 
$\varphi_{D}(p) = \varphi_{D}(q)$ if and only if 
the divisor $D^{\prime} = D - p - q$ satisfies the  conditions above.

Now let us consider the following scheme:
\[
Y := \{ (D^{\prime}, x, y)  ~|~  | D^{\prime}| \textrm{ is a complete } g^1_{g-2} ~,~
\]
\[
| D^{\prime} +x +y|  \textrm{ is a complete and base point free } 
g^2_{g}  ~,
\]
\[
| K_C - D^{\prime} -x -y|  \textrm{ is a complete and base point free } g^1_{g-2} \} ~,
\]
and the two maps: 
\[pr_1 \colon Y \to W^1_{g-2}(C) \textrm{ and  }
\pi \colon Y \to W^2_{g}(C) , (D^{\prime}, x, y) \mapsto D^{\prime} + x + y ~.
\]
According to the description above,
the morphism $\pi$ is dominant on every component of $W^2_{g}(C)$,
and the fiber of the morphism $\pi$ over a generic divisor $D$ in a component of $W^2_{g}(C)$,
is the set of all triples $(D-x-y, x, y)$ such that 
$\varphi_{D}(x) = \varphi_{D}(y)$.
Remark that the fiber over a divisor 
$D^{\prime} \in \mathrm{Im} (pr_1)$ of 
$pr_1 \colon Y \to W^2_g(C)$ is 
the set of all triples $(D^{\prime},x,y)$ such that 
$\varphi_{K_C-D^{\prime}} (x) = \varphi_{K_C -D^{\prime}} (y)$.

Therefore, in order to prove point (\ref{puntodue}) in the statement of the theorem,
let us suppose by contradiction that for a generic divisor $D$ in a component $X$ of $W^2_{g}(C)$
the morphism $\varphi_{D} \colon C \to \PP^2$ is not birational to its image.
Then the fibers of $\pi$ have positive dimension, so there is a component $Y_0 \subseteq Y$ 
such that $\dim Y_0 > \dim X = g-6$.
That component $Y_0$ must be dominant through $pr_1$ onto a component $W_0$ of $W^1_{g-2}(C)$ as well,
otherwise the generic fiber of $pr_1$ would have dimension  $2$ which is impossible.
Then we deduce that for a generic element $D^{\prime} \in W_0$,
the morphism $\varphi_{K_C - D^{\prime}} \colon C \to \overline(C) \subset \PP^2$
is not birational, and has degree $m >1$.
With the same argument as in the proof of Theorem \ref{voisinthm},
as we know that  $|K_C - D^{\prime} -x -y|$ is a complete base point free $g^1_{g-2}$ for some $x,y \in C$,
we see that 
in fact it must be of degree $2$ in this case.

Then we proceed as in the proof of Theorem \ref{voisinthm},
considering the normalization 
$\nu \colon \widetilde{C} \to \overline{C}$,
and the map 
$\widetilde{\varphi} \colon C \to 
\widetilde{C}$. 
Then 
$\widetilde{C}$ has a complete linear series of rank 3 and degree
$\frac{g}{2}$ therefore it is not a rational curve.
And so when $D^{\prime}$ varies in $W_0$ the curve 
$\widetilde{C}$ and the map $\widetilde{\varphi}$ are fixed.
Let us call $g^{\prime} = g(\widetilde{C})$ its genus.

The divisor  $K_C - D^{\prime}$
is the pull back through 
$\widetilde{\varphi}$ of a $g^2_{g/2}$ on $\widetilde{C}$.
So we have 
\[
(g+1)/2 \geqslant g^{\prime} \geqslant \dim W^2_{g/2}(\widetilde{C}) \geqslant \dim W_0 = g-6 ~,
\] 
the first inequality following from Riemann-Hurwitz formula.
As $g \geqslant 11$ by hypothesis, and even,
then we must have $g=12$, $g^{\prime} = g/2 = 6$, 
and $\dim W^2_{g/2}(\widetilde{C}) = g-6 = g^{\prime}$ which is impossible.
So this completes the proof of point (\ref{puntodue}).

So we have proven that for a divisor
$D$ generic in a component of $W^2_{g}$ 
the liear series $\lls{D}$ and $\lls{K_C -D}$ are base point free, and
the map 
$\varphi_{D} \colon C \to \overline{C} \subset \PP^2$ is birational.
Let us prove that the multiplicity of any point $p \in \overline{C}$ is at most $2$.

Consider the scheme defined above:

\[
Y := \{ (D^{\prime}, x, y)  ~|~  | D^{\prime}| \textrm{ is a complete } g^1_{g-2} ~,~
\]
\[
| D^{\prime} +x +y|  \textrm{ is a complete and base point free } 
g^2_{g}  ~,
\]
\[
| K_C - D^{\prime} -x -y|  \textrm{ is a complete and base point free } g^1_{g-2} \} ~,
\]
and the two maps: 
\[pr_1 \colon Y \to W^1_{g-2}(C) \textrm{ and  }
\pi \colon Y \to W^2_{g}(C) , (D^{\prime}, x, y) \mapsto D^{\prime} + x + y ~.
\]

{\bf We claim that:} every component of $Y$ 
dominating a component of $W^2_g$ through $\pi$,
dominates through $pr_1$ a component of $W^1_{g-2}$ as well.

If the claim holds,
we can show that the multiplicity $m_p(\overline{C})$,
of any point of $p \in \overline{C} = \varphi_D (C) \subset \PP^2$,
is at most $2$.
In fact in that case, for $D$ generic in a component of $W^2_{g}$,
we have that for any couple $(x, y) \in C^2$ 
such that $H^0(D-x) = H^0(D-y) = H^0(D-x-y)$,
the divisor $D^{\prime} = D -x -y$ is generic in a component of
$W^1_{g-2}$ so it is a base point free divisor, therefore the image 
$\varphi_D(x) = \varphi_D (y)$ cannot have multiplicity higher than $2$.

Let us prove now that the claim above holds:
we have to show that any component $Y_0$ of $Y$ that dominates 
through $\pi$
a component of $W^2_g$ dominates a component of 
$W^1_{g-2}$ through $p_1$.

Recall that by Mumford's lemma \ref{mumfordlemma} all components of 
$W^1_{g-2}$ have dimension  $g-6$ (hence all components of $W^2_g$ as well).

Suppose by contradicion that there is a component 
 $Y_0$ of $Y$, dominating a component of $W^2_g$,
 wich does not dominate any component in $W^1_{g-2}$.
Then the component $Y_0$ has dimension $g-6$,
as the fiber $\pi^{-1}(D)$ of a generic divisor
$D \in W^2_g$ is finite.
The 
image $p_1(Y_0)$ is then a locus
strictly contained in a component of $W^1_{g-2}$,
and therefore of dimension $g-7$,
as the fibers cannot have dimension greater than $1$.

Now, for a given $D^{\prime} \in pr_1(Y_0)$,
the divisor $K_C - D^{\prime}$ is base point free and induces 
a map 
$\varphi_{K_C - D^{\prime}} \colon C \to \overline{C} \subset \PP^2$.
As the fiber 
$pr_1^{-1}(D^{\prime})$ is positive dimensional,
the morphism $\varphi_{K_C - D^{\prime}}$ 
is not birational,
and as 
$| K_C - D^{\prime} -x -y|$  
is complete and base point free,
then
$\varphi_{K_C - D^{\prime}}$ is a degree 2 morphism.

Proceeding as in the previous proofs,
we see that the normalization 
$\widetilde{C}$ 
of $\overline{C}$
and
the morphism $\widetilde{\varphi} \colon C \to \widetilde{C}$ 
do not vary
when $D^{\prime}$ varies in $pr_1(Y_0)$,
and that the divisor $K - D^{\prime}$ 
is the pull back $\widetilde{\varphi}^*{E}$
of a divisor $E \in W^2_{g/2}(\widetilde{C})$.

Let us call $g^{\prime} = g(\widetilde{C})$ 
the genus of $\widetilde{C}$,
then we have the following inequalities:
\[ 
(g+1)/2 \geqslant g^{\prime} \geqslant \dim W^2_{g/2}(\widetilde{C}) 
\geqslant g-7 ~.\]

Then $g$ must be even and we have the following cases:

\begin{enumerate}

\item $g=12$ and $5 \leqslant g^{\prime} \leqslant 6$;
\item $g =14$ and $g^{\prime} = 7$.

\end{enumerate}
  
The first case would have either $g^{\prime} = 5$ 
and $\dim W^2_6(\widetilde{C}) = 5$, which is impossible;
or $g^{\prime} = 6$ 
and $\dim W^2_6(\widetilde{C}) = 5$ which is impossible as well.

The second case satisfies
$g =14$,  $g^{\prime} = 7$, and $\dim W^2_6(\widetilde{C}) = 7$,
which is impossible again.

Therefore we have shown the claim that 
every component of $Y$ 
dominating a component of $W^2_g$ through $\pi$,
dominates  through $p_1$  a component of $W^1_{g-2}$ as well,
and we have seen that this implies that
for a generic $D \in W^2_g(C)$ the morphism
$\varphi_D \colon C \to \overline{C}$ is birational
and its image $\overline{C}$ has points of multiplicity at 
most $2$.
Now to complete the proof of point (\ref{puntotre}) in the theorem
we just have to apply Lemma \ref{lemmamult}.

\end{proof}

\begin{remark}

Theorem \ref{voisinconsequence}
is basically equivalent to Proposition II.0 part (b) in \cite{voisin}.
We include the proof here as it is interesting in itself,
and as the statement is not proven in that work
(it uses similar techniques as in previous results in the same article
\emph{i.e.} as in the proof of Theorem \ref{voisinthm}).

\end{remark}

\section{Counterexamples on plane curves}

In this section we show  that any smooth plane curve of degree $7$ 
admits counterexamples to Conjecture \ref{complconj}.

\begin{theorem}

Let $C$ be a smooth plane curve of degree $d=7$.
Then a generic element $L$ in any component of $W^2_{15} (C)$ satisfies:

\begin{enumerate}

\item The complete linear series $|L|$ is base point free and linearly stable.

\item The vector bundle  $M_L$ is not semistable.
\end{enumerate}  

\end{theorem}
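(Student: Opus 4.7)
The proof splits into the two claims. For (1), the curve $C$ has genus $g = \binom{d-1}{2} = 15$ and gonality $\gamma = d-1 = 6$, so $g \geqslant 11$ and $\gamma \geqslant 5$. Hence Theorem \ref{voisinconsequence} applies with $g = 15$ and immediately yields both base-point-freeness and linear stability of $|L|$ for a generic $L$ in any component of $W^2_{15}(C) = W^2_g(C)$, with $h^0(L) = 3$.

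For (2), note that $M_L$ is a rank-$2$ bundle with $\det M_L = L^{-1}$, so $\mu(M_L) = -15/2$; thus any line subsheaf of degree $\geqslant -7$ destabilizes $M_L$. The candidate I would exploit, coming from the plane embedding, is $\oo_C(-1)$, since $\oo_C(1)$ satisfies $\deg \oo_C(1) = 7$ and $h^0(\oo_C(1)) = 3$. The plan is to exhibit a nonzero element in the kernel of the multiplication map
\[
\mu_L \colon H^0(L) \otimes H^0(\oo_C(1)) \longrightarrow H^0(L(1)) .
\]
Tensoring the defining sequence $0 \to M_L \to H^0(L) \otimes \oo_C \to L \to 0$ with $\oo_C(1)$ and taking global sections identifies $H^0(M_L(1))$ with $\ker \mu_L$, so any nonzero element in that kernel produces a nonzero (hence, $\oo_C(-1)$ being torsion-free of rank one, injective) morphism $\oo_C(-1) \hookrightarrow M_L$. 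This is a rank-one subsheaf of $M_L$ of degree $-7 > -15/2$, contradicting semistability.

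The only remaining step is a dimension count for $\ker \mu_L$. One has $\dim\bigl(H^0(L) \otimes H^0(\oo_C(1))\bigr) = 9$, and using $K_C = \oo_C(4)$ together with Riemann--Roch and Serre duality,
\[
h^0(L(1)) = 22 - g + 1 + h^0(\oo_C(3) \otimes L^{-1}) = 8 + h^0(\oo_C(3) \otimes L^{-1}) .
\]
The condition $h^0(\oo_C(3) \otimes L^{-1}) > 0$ means $L \cong \oo_C(3)(-D)$ for some effective divisor $D$ of degree $6$; such line bundles trace out a locus of dimension at most $\dim C_6 = 6$ in $\mathrm{Pic}^{15}(C)$, while every component of $W^2_{15}(C)$ has dimension at least $\rho(15,2,15) = 9$. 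Hence this locus is avoided on a dense open subset of each component, and generically $h^0(L(1)) = 8$, so $\dim \ker \mu_L \geqslant 9 - 8 = 1$.

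The main conceptual obstacle is identifying the correct destabilizing subbundle: the plane structure of $C$ is not reflected directly in the abstract line bundle $L$, so one must bring in the auxiliary bundle $\oo_C(1)$ from the plane embedding rather than working only with sub-linear-series of $|L|$. Indeed, the linear stability proven in part (1) already rules out any destabilization of $M_L$ coming from sub-pencils of $H^0(L)$ via the diagram in Lemma \ref{linslope}, so the destabilizing subbundle must come from elsewhere. Once $\oo_C(1)$ is in play, the argument reduces to a standard kernel-of-multiplication computation combined with a Brill--Noether dimension count.
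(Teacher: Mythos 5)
Your proposal is correct and follows essentially the same route as the paper: part (1) via Theorem \ref{voisinconsequence} with $g=15$, $\gamma=6$, and part (2) by showing $H^0(M_L\otimes\oo_C(1))=\ker\mu_L\neq 0$ for generic $L$, using the dimension count that the locus $\{\oo_C(3)(-D):D\ \mathrm{effective\ of\ degree}\ 6\}$ has dimension $6<9\leqslant\dim$ of any component of $W^2_{15}(C)$, and then destabilizing $M_L$ by the resulting $\oo_C(-1)\hookrightarrow M_L$ of slope $-7>-15/2$. The paper's bundle $B=\oo_{\PP^2}(1)|_C$ and its use of $H^1(B\otimes L)^*\cong H^0(\omega_C\otimes B^*\otimes L^*)$ are exactly your $\oo_C(1)$ and $h^0(\oo_C(3)\otimes L^{-1})$, so the two arguments coincide.
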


\begin{proof}

The first point is given by Theorem \ref{voisinconsequence} above,
as the curve has genus $g=15$ and gonality $\gamma =6$. 
We have to exhibit a destabilization of $M_L$ in this case.

Let us consider the line bundle $B = \oo_{\PP^2}(1)_{|C}$, it is a line bundle
of degree $7$ with $h^0(C, B) = 3$. Using  the exact sequence
\[
0 \to M_L \otimes B \to H^0(C, L) \otimes B \to L \otimes B \to 0
\]
and passing to cohomology, we have:
\[
0 \to H^0(C, M_L \otimes B) \to H^0(C, L) \otimes H^0(C,B) \to H^0(C, L \otimes B) ~.
\]

Now, let us call $W = W^0_6 (C) \subset \mathrm{Pic}^6(C)$ the locus of effective divisors of degree $6$,
then clearly $\dim W = 6$ , so the locus 
\[
(\omega_C \otimes B^*) - W := \{L \in W^2_{15} (C) ~ | ~  L = \omega_C \otimes B^* \otimes F^* ~,~ F \in W \}
\]
has dimension $6$ as well. As every component of $W^2_{15} (C)$ has dimension at least $g-6 = 9$,
then a general line bundle $L$ in such a component is not contained in $(\omega_C) - W$.
Therefore for a general element $L$ of a component of $W^2_{15} (C)$ we have:
\[
H^0(C, \omega_C \otimes B^* \otimes L^*) \cong H^1(C, B \otimes L)^* = 0 ~,
\]
so by Riemann-Roch we have 
\[
h^0(C,  B \otimes L) = \deg (B \otimes L) + 1 - g = 8 < \dim H^0(C, L) \otimes H^0(C,B) =9 ~.
\]
Therefore $H^0(C, M_L \otimes B) \neq 0$ and we have an injection:
\[
B^* \hookrightarrow M_L
\]
which provides a destabilization as $\mu (B^*) = -7 > \mu(M_L) = -15/2$. 

\end{proof}

\begin{remark}

For a line bundle $L$ as above we have the following diagram:
\begin{equation}\label{diagramabutler}
   \xymatrix{0  \ar[r]  & B^*   \ar@{^{(}->}[d]  \ar[r]    & 
 H^0(C,B)^* \otimes\mathcal{O}_C \ar[d]^{\cong}  \ar[r]  & {F}   
\ar@{->>}[d] \ar[r]  & 0\\
                                0\ar[r]^{}      & M_{V,L} \ar[r]_{}& V\otimes\mathcal{O}_C  \ar[r]_{}   & 
L \ar[r]^{} & 0    ~,\\  }
                       \end{equation}
where $F:= \ker(H^0(C,B) \otimes \oo_C \twoheadrightarrow B)^*$ is a rank 2 vector bundle.
However we cannot have a diagram as in (\ref{diagramalineal}) which would provide a linear destabilization;
in particular, the line bundle $L$, being generic, does not admit an injection $B \hookrightarrow L$.

\end{remark}

\begin{remark}

We remark the techniques in the previous section
provide linearly stable complete base point free
linear systems of rank $2$ on all curves 
with genus $g \geqslant 11$ and gonality $\gamma \geqslant 5$,
however the very same techniques
cannot be applied to find other conterexamples on plane curves 
with different degrees.
\end{remark}

\begin{remark}

In the recent works \cite{5f, ecmist2, ecmist3} 
the first named author considers stable base loci, augmented and restricted base loci
for vector bundles. It would be interesting to compute 
explicitely the base loci in these cases 
for the  unstable bundles  $M_L^*$ constructed above. In fact these are 
globally generated vector bundles, therefore they are nef vector bundles,
however they are not semistable, and need not be ample.

\end{remark}

\begin{question}

As smooth plane curves of degree $7$ are 
not generic in the moduli space $\mathcal{M}_{15}$ 
of smooth genus $15$ curves,
and as all of them do not satisfy Conjecture \ref{complconj},
it would be interesting to describe the locus in 
$\mathcal{M}_{15}$
of all curves not satisfying Conjecture \ref{complconj}
and its numerical properties.

\end{question}


\begin{thebibliography}{BKK{\etalchar{+}}15}

\bibitem[BKK{\etalchar{+}}15]{5f}
Thomas Bauer, S\'{a}ndor~J. Kov\'{a}cs, Alex K\"{u}ronya, Ernesto~C. Mistretta,
  Tomasz Szemberg, and Stefano Urbinati, \emph{On positivity and base loci of
  vector bundles}, Eur. J. Math. \textbf{1} (2015), no.~2, 229--249.


\bibitem[BBN15]{leticia}
 U. N. Bhosle; L. Brambila-Paz and P. E. Newstead, {\it On linear series and a conjecture of D. C. Butler}. Internat. J. Math. {\bf 26} (2015), no. 2, 1550007, 18 pp.
 
 

\bibitem[BT16]{hugoleticia} 
L. Brambila-Paz and H. Torres-López,
\emph{{On Chow stability for algebraic curves}},
Manuscripta Math. {\bf 151} (2016), no. 3-4, 289–304.

 
 
\bibitem[But97]{but}
{D. C. Butler},
 {\it Birational maps of
moduli of Brill-Noether pairs}. Preprint, arXiv:alg-geom/9705009.
 

\bibitem[CTL18]{castorenatorres}
A. Castorena and H. Torres-L\'{o}pez, \emph{Linear stability and stability of
  syzygy bundles}, Internat. J. Math. \textbf{29} (2018), no.~11,1--14.
  
    
\bibitem[EL89]{EinLaza}
 L. Ein and R. Lazarsfeld, {\it Stability and restrictions of Picard bundles, with an application to the normal bundles of elliptic curves}. In: Ellingsrud, G., Peskine, C., Sacchiero, G., Stromme, S.A. (eds.) Complex Projective Geometry (Trieste 1989/Bergen 1989). LMS Lecture Note Series, vol. 179, pp. 149--156. CUP, Cambridge (1992).



\bibitem[Kee90]{keem}
C. Keem, \emph{On the variety of special linear systems on an algebraic curve},
  Math. Ann. \textbf{288} (1990), no.~2, 309--322.

\bibitem[MS12]{mistrettastoppino}
E. C. Mistretta and L. Stoppino, \emph{Linear series on curves:
  stability and {C}lifford index}, Internat. J. Math. \textbf{23} (2012),
  no.~12, 1--25.

  
\bibitem[MU17]{ecmist2}
E. C. Mistretta and S. Urbinati, \emph{Iitaka fibrations for vector bundles},
Int. Math. Res. Not. IMRN (2019), no. 7, 2223--2240.

 
  \bibitem[Mis06]{mistretta06}
E. C. Mistretta, {\it Stable vector bundles as generators of the Chow ring}, Geom. Dedicata  {\bf 117} (2006), 203--213.
  
  
  \bibitem[Mis08]{mistretta}
\bysame, {\it Stability of line bundle transforms on curves with respect to low codimensional subspaces}. J. Lond. Math. Soc. (2) {\bf 78} (2008), no. 1, 172--182.
  

\bibitem[{Mis}18]{ecmist3}
\bysame,
\emph{{Holomorphic symmetric differentials and a
  birational characterization of Abelian Varieties}}, arXiv e-prints (2018),
  arXiv:1808.00865.

\bibitem[{Mis}19]{ecmistable}
\bysame,
\emph{{On Stability of Tautological Bundles and their Total Transforms}},
Milan J. Math. {\bf87} (2019), no. 2, 273--282.



\bibitem[Mum74]{mumford}
D. Mumford, \emph{Prym varieties. {I}}, Contributions to analysis (a
  collection of papers dedicated to {L}ipman {B}ers), Academic Press, New York,
  1974, pp.~325--350.

\bibitem[Voi88]{voisin}
C. Voisin, \emph{Courbes t\'{e}tragonales et cohomologie de {K}oszul}, J.
  Reine Angew. Math. \textbf{387} (1988), 111--121.
  
\end{thebibliography}

\newcommand{\etalchar}[1]{$^{#1}$}
\providecommand{\bysame}{\leavevmode\hbox to3em{\hrulefill}\thinspace}
\providecommand{\MR}{\relax\ifhmode\unskip\space\fi MR }
\providecommand{\MRhref}[2]{%
  \href{http://www.ams.org/mathscinet-getitem?mr=#1}{#2}
}
\providecommand{\href}[2]{#2}


\end{document}